\providecommand{\U}[1]{\protect\rule{.1in}{.1in}}
\newtheorem{theorem}{Theorem}
\newtheorem{definition}[theorem]{Definition}
\newtheorem{lemma}[theorem]{Lemma}
\begin{document}

\title{Analytical solution of the fractional linear time-delay systems and their
Ulam-Hyers stability}
\author{Nazim I. Mahmudov\\Eastern Mediterranean University \\Department of Mathematics \\Famagusta, 99628 T. R. Northen Cyprus\\Mersin 10 Turkey}
\date{20 March 2021}
\maketitle

\begin{abstract}
\bigskip We introduce the delayed Mittag-Leffler type matrix functions,
delayed fractional cosine, delayed fractional sine and use the Laplace
transform to obtain an analytical solution to the IVP for a Hilfer type
fractional linear time-delay system $D_{0,t}^{\mu,\nu}z\left(  t\right)
+Az\left(  t\right)  +\Omega z\left(  t-h\right)  =f\left(  t\right)  $ of
order $1<\mu<2$ and type $0\leq\nu\leq1,$ with nonpermutable matrices $A$ and
$\Omega$. Moreover, we study Ulam-Hyers stability of the Hilfer type
fractional linear time-delay system. Obtained results extend those for Caputo
and Riemann-Liouville type fractional linear time-delay systems and new even
for these fractional delay systems.

\end{abstract}

\section{Introduction and auxiliary lemmas}

Khusainov et al. \cite{12} studied the following Cauchy problem for a second
order linear differential equation with pure delay:%
\begin{equation}
\left\{
\begin{array}
[c]{c}%
x^{\prime\prime}\left(  t\right)  +\Omega^{2}x\left(  t-\tau\right)  =f\left(
t\right)  ,\ \ t\geq0,\ \tau>0,\\
x\left(  t\right)  =\varphi\left(  t\right)  ,\ \ x^{\prime}\left(  t\right)
=\varphi^{\prime}\left(  t\right)  ,\ \ -\tau\leq t\leq0,
\end{array}
\right.  \label{os1}%
\end{equation}
where $f:\left[  0,\infty\right)  \rightarrow\mathbb{R}^{n}$, $\Omega$ is a
$n\times n$ nonsingular matrix, $\tau$ is the time delay and $\varphi$ is an
arbitrary twice continuously differentiable vector function. A solution of
(\ref{os1}) has an explicit representation of the form \cite[Theorem 2]{12}:%
\begin{align*}
x\left(  t\right)   &  =\left(  \cos_{\tau}\Omega t\right)  \varphi\left(
-\tau\right)  +\Omega^{-1}\left(  \sin_{\tau}\Omega t\right)  \varphi^{\prime
}\left(  -\tau\right) \\
&  +\Omega^{-1}\int_{-\tau}^{0}\sin_{\tau}\Omega\left(  t-\tau-s\right)
\varphi^{\prime\prime}\left(  s\right)  ds\\
&  +\Omega^{-1}\int_{0}^{t}\sin_{\tau}\Omega\left(  t-\tau-s\right)  f\left(
s\right)  ds,
\end{align*}
where $\cos_{\tau}\Omega:\mathbb{R}\rightarrow\mathbb{R}^{n\times n}$ and
$\sin_{\tau}\Omega:\mathbb{R}\rightarrow\mathbb{R}^{n\times n}$ denote the
delayed matrix cosine of polynomial degree $2k$ on the intervals $\left(
k-1\right)  \tau\leq t<k\tau$ and the delayed matrix sine of polynomial degree
$2k+1$ on the intervals $\left(  k-1\right)  \tau\leq t<k\tau$, respectively.

It should be stressed out that pioneer works \cite{12}, \cite{13} led to many
new results in integer and noninteger order time-delay differential equations
and discrete delayed system; see \cite{1}-\cite{11}, \cite{14}, \cite{15},
\cite{18}-\cite{21}.

Introducing the fractional analogue delayed matrices cosine/sine of a
polynomial degree, see formulas (\ref{cos1}) and (\ref{sin1}), Liang et al.
\cite{16} gave representation of a solution to the initial value problem
(\ref{w1}):

\begin{theorem}
\label{thm:wang1}\cite{16} Let $h>0$, $\varphi\in C^{2}\left(  \left[
-h,0\right]  ,R^{n}\right)  $, $\Omega$ be a nonsingular $n\times n$ matrix,
and let $f:\left[  0,\infty\right)  \rightarrow R^{n}$ be a given function.
The solution $x:\left[  -h,\infty\right)  \rightarrow R^{n}$ of the intial
value problem
\begin{equation}
\left\{
\begin{array}
[c]{c}%
^{C}D_{-h}^{\alpha}\left(  ^{C}D_{-h}^{\alpha}\right)  x\left(  t\right)
+\Omega^{2}x\left(  t-h\right)  =0,\ t\geq0,\ h>0,\\
x\left(  t\right)  =\varphi\left(  t\right)  ,\ \ x^{\prime}\left(  t\right)
=\varphi^{\prime}\left(  t\right)  ,\ \ -h\leq t\leq0,
\end{array}
\right.  \label{w1}%
\end{equation}
has the form
\begin{align*}
x\left(  t\right)   &  =\left(  \cos_{h,\alpha}\Omega t^{\alpha}\right)
\varphi\left(  -h\right)  +\Omega^{-1}\left(  \sin_{h,\alpha}\Omega\left(
t-h\right)  ^{\alpha}\right)  \varphi^{\prime}\left(  0\right)  \\
&  +\Omega^{-1}\int_{-h}^{0}\cos_{h,\alpha}\Omega\left(  t-h-s\right)
^{\alpha}\varphi^{\prime}\left(  s\right)  ds
\end{align*}
where $\cos_{h,\alpha}\Omega t^{\alpha}$ is the fractional delayed matrix
cosine of a polynomial of degree $2k\alpha$ on the intervals $\left(
k-1\right)  h\leq t<kh$, $\sin_{h,\alpha}\Omega t^{\alpha}$ is the fractional
delayed matrix sine of a polynomial of degree $\left(  2k+1\right)  \alpha$ on
the intervals $\left(  k-1\right)  h\leq t<kh$ defined as follows
\begin{equation}
\cos_{h,\alpha}\Omega t^{\alpha}:=\left\{
\begin{tabular}
[c]{ll}%
$\Theta,$ & $-\infty<t<-h,$\\
$I,$ & $-h\leq t<0,$\\
& \\
$I-\Omega^{2}\dfrac{t^{2\alpha}}{\Gamma\left(  2\alpha+1\right)  }+...+\left(
-1\right)  ^{k}\Omega^{2k}\dfrac{\left(  t-\left(  k-1\right)  h\right)
^{2k\alpha}}{\Gamma\left(  2k\alpha+1\right)  },$ & $\left(  k-1\right)  h\leq
t<kh,$%
\end{tabular}
\ \ \right.  \label{cos1}%
\end{equation}%
\begin{equation}
\sin_{h,\alpha}\Omega t^{\alpha}:=\left\{
\begin{tabular}
[c]{ll}%
$\Theta,$ & $-\infty<t<-h,$\\
$\Omega\dfrac{\left(  t+h\right)  ^{\alpha}}{\Gamma\left(  \alpha+1\right)  }$
$,$ & $-h\leq t<0,$\\
& \\
$\Omega\dfrac{\left(  t+h\right)  ^{\alpha}}{\Gamma\left(  \alpha+1\right)
}-\Omega^{3}\dfrac{t^{3\alpha}}{\Gamma\left(  3\alpha+1\right)  }$ & \\
$+...+\left(  -1\right)  ^{k}\Omega^{2k+1}\dfrac{\left(  t-\left(  k-1\right)
h\right)  ^{\left(  2k+1\right)  \alpha}}{\Gamma\left(  \left(  2k+1\right)
\alpha+1\right)  },$ & $\left(  k-1\right)  h\leq t<kh,$%
\end{tabular}
\ \ \ \right.  \label{sin1}%
\end{equation}
and $I$ is the identity matrix and $\Theta$ is the null matrix.
\end{theorem}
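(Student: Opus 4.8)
The terminating power-series form of the delayed matrix functions \eqref{cos1}--\eqref{sin1}, with a frequency shift $e^{-khs}$ attached to the $k$-th block, makes the Laplace transform the natural instrument; this is the route I would take. Assume $0<\alpha<1$, so that the sequential operator ${}^{C}D_{-h}^{\alpha}\big({}^{C}D_{-h}^{\alpha}\cdot\big)$ has total order $2\alpha\in(1,2)$. First I would recast \eqref{w1} as the coupled system ${}^{C}D_{-h}^{\alpha}x=y$, ${}^{C}D_{-h}^{\alpha}y=-\Omega^{2}x(\cdot-h)$ with history $x\equiv\varphi$ on $[-h,0]$, then apply the Laplace transform. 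Using the rule $\mathcal{L}\!\left[{}^{C}D_{-h}^{\alpha}g\right](s)=s^{\alpha}\widehat{g}(s)-s^{\alpha-1}g(-h)$ for each of the two fractional derivatives, together with $\mathcal{L}\!\left[x(\cdot-h)\right](s)=e^{-hs}\widehat{x}(s)+e^{-hs}\int_{-h}^{0}e^{-s\theta}\varphi(\theta)\,d\theta$, one is led to an algebraic identity of the shape
\begin{equation*}
\big(s^{2\alpha}I+e^{-hs}\Omega^{2}\big)\widehat{x}(s)=s^{2\alpha-1}\varphi(-h)+s^{\alpha-1}\big({}^{C}D_{-h}^{\alpha}x\big)(-h)-e^{-hs}\Omega^{2}\int_{-h}^{0}e^{-s\theta}\varphi(\theta)\,d\theta .
\end{equation*}
Since $x=\varphi$ on the whole history interval, the fractional initial value $\big({}^{C}D_{-h}^{\alpha}x\big)(-h)$ is computable from $\varphi'$, and after an integration by parts in the last integral the right-hand side reduces to precisely the data occurring in the statement.

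The heart of the argument is the inversion of the resolvent. For $\operatorname{Re}s$ large I would expand
\begin{equation*}
\big(s^{2\alpha}I+e^{-hs}\Omega^{2}\big)^{-1}=\sum_{k\ge 0}(-1)^{k}\,\Omega^{2k}\,e^{-khs}\,s^{-2\alpha(k+1)},
\end{equation*}
multiply by the right-hand side of the previous display, and invert term by term via $\mathcal{L}^{-1}\!\left[e^{-khs}s^{-\beta}\right](t)=(t-kh)^{\beta-1}/\Gamma(\beta)$ for $t\ge kh$ (and $0$ otherwise). The shift $e^{-khs}$ annihilates the $k$-th summand for $t<kh$, so on each interval only finitely many terms survive; collecting them, with exponents of the form $2\alpha j+\mathrm{const}$ and the matching powers of $\Omega$, should reproduce the finite sums that define $\cos_{h,\alpha}\Omega t^{\alpha}$ in \eqref{cos1} (for the $\varphi(-h)$ part) and $\sin_{h,\alpha}\Omega t^{\alpha}$ in \eqref{sin1} (for the $\varphi'$ part), the remaining integral assembling into $\Omega^{-1}\int_{-h}^{0}\cos_{h,\alpha}\Omega(t-h-s)^{\alpha}\varphi'(s)\,ds$. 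It then remains to confirm the initial data by direct substitution: on $-h\le t<0$ one has $\cos_{h,\alpha}\Omega t^{\alpha}=I$, while the other two blocks reduce to explicit expressions, and the three add up to $\varphi$; one differentiation then recovers $\varphi'$.

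I expect the real difficulty to concentrate in three places. First, the bookkeeping for the \emph{sequential} operator: ${}^{C}D_{-h}^{\alpha}\big({}^{C}D_{-h}^{\alpha}x\big)$ is not the Caputo derivative of order $2\alpha$, so the transform picks up the pair $\big(x(-h),\,({}^{C}D_{-h}^{\alpha}x)(-h)\big)$ rather than $\big(x(-h),\,x'(-h)\big)$, and one must verify that $({}^{C}D_{-h}^{\alpha}x)(-h)$, computed from the history $x=\varphi$, is consistent with the $\varphi'$ data that ultimately surfaces. Second, the analytic legitimacy of the term-by-term inversion — convergence of the operator Neumann series for $\operatorname{Re}s$ large, the interchange of the infinite sum with $\mathcal{L}^{-1}$, and the check that the resulting locally finite series is a genuine solution, classical on each open interval $((k-1)h,kh)$ — has to be argued from the terminating structure of \eqref{cos1}--\eqref{sin1}. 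Third, the precise identification of the coefficient of each block and of the history term, which is pinned down only by the requirement that the formula coincide with $\varphi$ on $[-h,0]$; getting these constants and the $\Gamma$-indices to line up — including the choice of base point for the transform, which fixes whether the $k$-th block carries the shift $t-kh$ or $t-(k-1)h$ — is the most error-prone step, and exactly the point at which the delayed cosine and the delayed sine have to play off each other.

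A route that avoids the transform-analytic subtleties, and which I would at least run as a consistency check, is direct substitution. Using ${}^{C}D_{-h}^{\alpha}(t+h)^{\beta}=\dfrac{\Gamma(\beta+1)}{\Gamma(\beta-\alpha+1)}(t+h)^{\beta-\alpha}$ term by term on each interval, one first establishes the fundamental differentiation identities
\begin{equation*}
{}^{C}D_{-h}^{\alpha}\big(\cos_{h,\alpha}\Omega t^{\alpha}\big)=-\,\Omega\,\sin_{h,\alpha}\Omega(t-h)^{\alpha},\qquad {}^{C}D_{-h}^{\alpha}\big(\Omega^{-1}\sin_{h,\alpha}\Omega t^{\alpha}\big)=\cos_{h,\alpha}\Omega t^{\alpha},
\end{equation*}
together with their translated (argument-shifted) versions; composing these across the intervals by induction on $k$ yields ${}^{C}D_{-h}^{\alpha}\big({}^{C}D_{-h}^{\alpha}\cos_{h,\alpha}\Omega t^{\alpha}\big)=-\Omega^{2}\cos_{h,\alpha}\Omega(t-h)^{\alpha}$, which is precisely the homogeneous equation for the $\cos$-block, and an analogous computation (differentiating under the integral sign for the history term) disposes of the rest. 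Here the delicate point is matching the pieces at the nodes $t=kh$ and keeping the exponent shifts aligned with the $\Gamma$-arguments — the same place where the two approaches must finally agree.
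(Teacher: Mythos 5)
First, a point of comparison: the paper contains no proof of this theorem at all --- it is imported verbatim from \cite{16} as background for the constructions that follow, so there is no in-paper argument to measure yours against. Judged on its own terms, your submission is a plan rather than a proof: the three items you yourself flag as ``the real difficulty'' (the initial-data bookkeeping for the sequential operator, the legitimacy of term-by-term inversion of the resolvent, and the identification of the coefficients of each block) are exactly the content of the theorem, and none of them is carried out. The two differentiation identities in your final paragraph, ${}^{C}D_{-h}^{\alpha}\bigl(\cos_{h,\alpha}\Omega t^{\alpha}\bigr)=-\Omega\sin_{h,\alpha}\Omega(t-h)^{\alpha}$ and ${}^{C}D_{-h}^{\alpha}\bigl(\Omega^{-1}\sin_{h,\alpha}\Omega t^{\alpha}\bigr)=\cos_{h,\alpha}\Omega t^{\alpha}$, are correct and are indeed the engine of the direct-verification route (essentially how \cite{16} proceeds); but composing them a second time is not innocuous: the lowest-order term of $\Omega^{-1}\sin_{h,\alpha}\Omega(t-h)^{\alpha}$ is $(t)_{+}^{\alpha}/\Gamma(\alpha+1)$, whose first $\alpha$-derivative is the step function $\mathbf{1}_{t\geq0}$ and whose second Caputo $\alpha$-derivative based at $-h$ is $t^{-\alpha}/\Gamma(1-\alpha)$, not zero. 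Handling such terms, and the matching of the finite sums at the nodes $t=kh$, is where the induction actually lives, and it is deferred rather than done.

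Second, one concrete step of your plan fails as written. You assert that on $-h\leq t<0$ the three blocks of the representation ``add up to $\varphi$.'' For the formula as stated they do not: there $\cos_{h,\alpha}\Omega t^{\alpha}=I$, the sine block vanishes (its argument lies below $-h$), and in the history integral $\Omega^{-1}\int_{-h}^{0}\cos_{h,\alpha}\Omega(t-h-s)^{\alpha}\varphi^{\prime}(s)\,ds$ the kernel equals $I$ for $s\leq t$ and $\Theta$ for $s>t$, so the integral reduces to $\Omega^{-1}\bigl(\varphi(t)-\varphi(-h)\bigr)$ and the total is $\varphi(-h)+\Omega^{-1}\bigl(\varphi(t)-\varphi(-h)\bigr)\neq\varphi(t)$ unless $\Omega=I$. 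Either the representation as transcribed here carries a spurious $\Omega^{-1}$ (or the wrong kernel) in the history term --- in which case a proof must first detect and repair this before any verification can begin --- or the check you describe cannot close. A complete argument has to confront this discrepancy explicitly; a sketch that asserts the initial-interval check succeeds is precisely the gap.
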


Mahmudov in \cite{17} studied the following R-L linear fractional differential
delay equation of order $1<2\alpha\leq2$ by introducing the concept of
fractional delayed matrix cosine $\cos_{h,\alpha,\beta}\left\{  A,\Omega
;t\right\}  $ and sine $\sin_{h,\alpha,\beta}\left\{  A,\Omega;t\right\}  $
\cite[Definitions 2 and 3]{17}.%

\begin{equation}
\left\{
\begin{array}
[c]{c}%
^{RL}D_{-h+}^{\alpha}\left(  ^{RL}D_{-h+}^{\alpha}\right)  x\left(  t\right)
+A^{2}x\left(  t\right)  +\Omega^{2}x\left(  t-h\right)  =f\left(  t\right)
,\ t\geq0,\ h>0,\\
x\left(  t\right)  =\varphi\left(  t\right)  ,\ \ \ \left(  I_{-h^{+}%
}^{1-\alpha}x\right)  \left(  -h^{+}\right)  =\varphi\left(  -h\right)
,\ -h\leq t\leq0,\\
\ ^{RL}D_{-h+}^{\alpha}x\left(  t\right)  =\ ^{RL}D_{-h+}^{\alpha}%
\varphi\left(  t\right)  ,\ \ \\
\left(  I_{-h^{+}}^{1-\alpha}\left(  D_{-h+}^{\alpha}x\right)  \right)
\left(  -h^{+}\right)  =\ \ ^{RL}D_{-h+}^{\alpha}\varphi\left(  -h\right)
,\ -h\leq t\leq0,
\end{array}
\right.  \label{pr1}%
\end{equation}
where $^{RL}D_{-h+}^{\alpha}$ stands for the R-L fractional derivative of
order $0<\alpha\leq1$ with lower limit $-h$, $A,\Omega\in\mathbb{R}^{n\times
n}$, $f\in C\left(  \left[  0,\infty\right)  ,\mathbb{R}^{n}\right)  $,
$\varphi\in C^{1}\left(  \left[  -h,0\right]  ,\mathbb{R}^{n}\right)  $.
Obviously, the derivative can be started at $-h$ instead of $0$, since the
function $x\left(  t\right)  $ governed by (\ref{pr1}) actually originates at
$-h.$ However, as is known, changing the starting point of the derivative
modifies the derivative and leads to a different problem. In this article, we
study the case when the derivative started at $0$.

We study the following Hilfer type linear fractional differential time-delay
equation of order $1<\mu<2$ and type $0\leq\nu\leq1$:
\begin{equation}
\left\{
\begin{array}
[c]{c}%
D_{0,t}^{\mu,\nu}z\left(  t\right)  +Az\left(  t\right)  +\Omega z\left(
t-h\right)  =f\left(  t\right)  ,\ \ \ t\in\left[  0,T\right]  ,\\
z\left(  t\right)  =\varphi\left(  t\right)  ,\ \ -h\leq t\leq0,\\
\left.  D_{0,t}^{-\left(  2-\mu\right)  \left(  1-\nu\right)  +1,\nu}z\left(
t\right)  \right\vert _{t=0}=a_{1},\\
\left.  D_{0,t}^{-\left(  2-\mu\right)  \left(  1-\nu\right)  ,\nu}z\left(
t\right)  \right\vert _{t=0}=a_{0},
\end{array}
\right.  \label{eq1}%
\end{equation}
where $D_{0,t}^{\mu,\nu}$ stands for the Hilfer fractional derivative of order
$1<\mu<2$ and type $0\leq\nu\leq1$ with lower limit $0$, $A,\Omega
\in\mathbb{R}^{d\times d}$, $f\in C\left(  \left[  0,T\right]  ,\mathbb{R}%
^{d}\right)  $, $\varphi\in C^{1}\left(  \left[  -h,0\right]  ,\mathbb{R}%
^{d}\right)  .$

Delayed perturbation of Mittag-Leffler matrix functions serves as a suitable
tool for solving linear fractional continuous time-delay equations. First, the
delayed matrix exponential function (delayed matrix Mittag-Leffler function)
was defined to solve linear purely delayed (fractional) systems of order one.
Then, the second order differential systems with pure delay were considered
and suitable delayed sine and cosine matrix functions were introduced in
\cite{12}. Later Liang et al. \cite{16} introduced the fractional analogue of
delayed cosine/sine matrices and obtained an explicit solution of the
sequential fractional Caputo type equations with pure delay, the case
$A=\Theta$ (zero matrix). Recently, Mahmudov \cite{17} introduced the
fractional analogue of delayed matrices cosine/sine in the case when $A$ and
$\Omega$ commutes to solve the sequential Riemann-Liouville type linear
time-delay system. It should be noticed that the model investigated here is
not sequential and differs from that of discussed in \cite{16}, \cite{17}. For
the sake of completeness, we also refer to studies of discrete/continuous
variants of delayed matrices used to obtain exact solution to linear
difference equations with delays \cite{1}-\cite{21}.

We introduce a concept of delayed Mittag-Leffler type matrix function of two parameters:

\begin{definition}
\label{def:sine}Delayed Mittag-Leffler type matrix function of two parameters
$Y_{\mu,\gamma}^{h}:\left[  0,\infty\right)  \rightarrow\mathbb{R}^{d}$ is
defined as follows:
\[
Y_{\mu,\gamma}^{h}\left(  A,\Omega;t\right)  =Y_{\mu,\gamma}^{h}\left(
t\right)  :=%
{\displaystyle\sum_{m=0}^{\infty}}
{\displaystyle\sum\limits_{k=0}^{\infty}}
\left(  -1\right)  ^{k}Q_{k,m}^{A,\Omega}\dfrac{\left(  t-mh\right)
_{+}^{k\mu+\gamma-1}}{\Gamma\left(  k\mu+\gamma\right)  },
\]
where $\left(  t\right)  _{+}=\max\left\{  0,t\right\}  $ and
\begin{align}
Q_{k,m}  &  =Q_{k,m}^{A,\Omega}=%
{\displaystyle\sum\limits_{j=m}^{k}}
A^{k-j}\Omega Q_{j-1,m-1}^{A,\Omega},\ \ .\label{qq1}\\
Q_{0,m}^{A,\Omega}  &  =Q_{k,-1}^{A,\Omega}=\Theta,\ \ Q_{k,0}^{A,\Omega
}=A^{k}\ \ Q_{0,0}^{A,\Omega}=I,\ \ k=0,1,2,...,m=0,1,2,...\nonumber
\end{align}

\end{definition}

In this definition $Q_{k,m}^{A,\Omega}$ plays the role of a kernel, see
\cite{17}, \cite{20}. It is clear that%
\begin{align*}
Q_{k+1,m}^{A,\Omega} &  =AQ_{k,m}^{A,\Omega}+\Omega Q_{k,m-1}^{A,\Omega},\\
Q_{0,m}^{A,\Omega} &  =Q_{k,-1}^{A,\Omega}=\Theta,\ \ Q_{0,0}^{A,\Omega}=I,\\
k &  =0,1,2,...,m=0,1,2,...
\end{align*}

We state the main novelties of our article as below:

\begin{itemize}
\item We introduce a novel delayed Mittag-Leffler type matrix function
$Y_{\mu,\gamma}^{h}\left(  A,\Omega;t\right)  $.

\item If $\Omega=\Theta,$ then%
\begin{align*}
Y_{2,1}^{h}\left(  A^{2},\Theta;t\right)   &  =%
{\displaystyle\sum\limits_{k=0}^{\infty}}
\left(  -1\right)  ^{k}A^{2k}\dfrac{t^{2k}}{\left(  2k\right)  !}=\cos\left(
At\right)  ,\ \ \ \\
AY_{2,2}^{h}\left(  A^{2},\Theta;t\right)   &  =A%
{\displaystyle\sum\limits_{k=0}^{\infty}}
\left(  -1\right)  ^{k}A^{2k}\dfrac{t^{2k+1}}{\left(  2k+1\right)  !}%
=\sin\left(  At\right)  ,\\
Y_{\mu,1}^{h}\left(  A^{2},\Theta;t\right)   &  =%
{\displaystyle\sum\limits_{k=0}^{\infty}}
\left(  -1\right)  ^{k}A^{2k}\dfrac{t^{k\mu}}{\Gamma\left(  k\mu+1\right)
},\ \ \ \\
Y_{\mu,2}^{h}\left(  A^{2},\Theta;t\right)   &  =%
{\displaystyle\sum\limits_{k=0}^{\infty}}
\left(  -1\right)  ^{k}A^{2k}\dfrac{t^{k\mu+1}}{\Gamma\left(  k\mu+2\right)
},\ 1<\mu<2.
\end{align*}
$Y_{\mu,1}^{h}\left(  A^{2},\Theta;t\right)  $ and $Y_{\mu,2}^{h}\left(
A^{2},\Theta;t\right)  $ can be called fractional cosine and sine for
$1<\mu<2.$ Similar cosine/sine matrix functions were defined in \cite{16},
\cite{19} to solve $1<2\mu<2$ order sequential fractional differential equations.

\item If $A=\Theta$ then we have
\[
Q_{m,m}^{A,\Omega}=\Omega^{m},\ \ \ \ Y_{\mu,\gamma}^{h}\left(  A,\Omega
;t\right)  =%
{\displaystyle\sum_{m=0}^{\infty}}
\left(  -1\right)  ^{m}\Omega^{m}\dfrac{\left(  t-mh\right)  _{+}^{m\mu
+\gamma-1}}{\Gamma\left(  m\mu+\gamma\right)  }.
\]
Moreover,%
\begin{align*}
Y_{\mu,1}^{h}\left(  \Theta,\Omega^{2};t\right)   &  =%
{\displaystyle\sum_{m=0}^{\infty}}
\left(  -1\right)  ^{m}\Omega^{2m}\dfrac{\left(  t-mh\right)  _{+}^{m\mu}%
}{\Gamma\left(  m\mu+1\right)  }=\cos_{\mu}^{h}\left(  \Omega;t\right)  ,\ \\
\Omega Y_{\mu,2}^{h}\left(  \Theta,\Omega^{2};t\right)   &  =\Omega%
{\displaystyle\sum_{m=0}^{\infty}}
\left(  -1\right)  ^{m}\Omega^{2m}\dfrac{\left(  t-mh\right)  _{+}^{m\mu}%
}{\Gamma\left(  m\mu+2\right)  }=\Omega\sin_{\mu}^{h}\left(  \Omega;t\right)
.
\end{align*}
Similar delayed cosine/sine matrix functions were defined in \cite{2},
\cite{19} to solve $1<2\mu<2$ order sequential fractional linear differential
equations with pure delay.

\item We give an exact analytical solution of the Hilfer type fractional
problem (\ref{eq1}) using delayed Mittag-Leffler type matrix function
$Y_{\mu,\gamma}^{h}\left(  A,\Omega;t\right)  $ and study their Ulam-Hyers
stability. Obtained results are new even for Caputo and Riemann-Liouville type
fractional linear time-delay systems.

\item Although the problem considered by us is fractional of order $1<\mu<2$
and type $0\leq\nu\leq1$, our approach is also applicable to the classical
second-order equations. Thus our results are new even for the classical second
order oscillatory system.
\end{itemize}

Before introducing properties of $Y_{\mu,\gamma}^{h}\left(  A,\Omega;t\right)
,$ we recall the definition of the Hilfer fractional derivative and Ulam-Hyers stability:

\begin{definition}
Let $m\in\mathbb{N},$ $m-1<\mu<m$, $0\leq\nu\leq1$, $a\in\mathbb{R}$, and
$f\in C^{m}\left[  a,b\right]  $. Then the Hilfer fractional derivative of $f$
of order $\mu$ and type $\nu$ is given by%
\[
D_{a,t}^{\mu,\nu}f\left(  t\right)  :=I_{a,t}^{\nu\left(  m-\mu\right)  }%
\frac{d^{m}}{dt^{m}}I_{a,t}^{\left(  1-\nu\right)  \left(  m-\mu\right)
}f\left(  t\right)  ,
\]
where%
\[
I_{a,t}^{\gamma}f\left(  t\right)  :=\frac{1}{\Gamma\left(  \gamma\right)
}\int_{a}^{t}\left(  t-s\right)  ^{\gamma-1}f\left(  s\right)  ds
\]
is the R-L fractional integral of $f$ of order $\gamma>0$.
\end{definition}

The main tool we use in this paper is the Laplace transform $F\left(
s\right)  :=L\left\{  f\left(  t\right)  \right\}  =\int_{0}^{\infty}%
e^{-st}f\left(  t\right)  dt,$ $\operatorname{Re}s>a$, which is defined for an
exponentially bounded function $f.$ Here are some of properties of the Laplace transform.

\begin{lemma}
\label{lem:1}The following equalities hold true for sufficiently large
\emph{Re}$\left(  s\right)  $ and appropriate functions $f,g$:

\begin{enumerate}
\item[(i)] $L\left\{  af\left(  t\right)  +bg\left(  t\right)  \right\}
=aL\left\{  f\left(  t\right)  \right\}  +bL\left\{  g\left(  t\right)
\right\}  ,\ \ \ a,b\in\mathbb{R};$

\item[(ii)] $L^{-1}\left\{  e^{-sh}s^{-1}\right\}  =1,\ \ t\geq h\geq0;$

\item[(iii)] $L^{-1}\left\{  F\left(  s\right)  G\left(  s\right)  \right\}
=\left(  f\ast g\right)  \left(  t\right)  ;$

\item[(iv)] $L\left\{  D_{0,t}^{\mu,\nu}f\left(  t\right)  \right\}  =s^{\mu
}L\left\{  f\left(  t\right)  \right\}  -%
{\displaystyle\sum\limits_{k=0}^{m-1}}
s^{m\left(  1-\nu\right)  +\mu\nu-k-1}I_{0,t}^{\left(  1-\nu\right)  \left(
m-\mu\right)  -k}f\left(  0\right)  ;$

\item[(v)] $L^{-1}\left\{  1\right\}  =\delta\left(  t\right)  ,\ $where
$\delta\left(  t\right)  $ is Dirac delta distribution;

\item[(vi)] $L^{-1}\left\{  e^{-nsh}s^{-n}\right\}  =\frac{\left(
t-nh\right)  _{+}^{n-1}}{\left(  n-1\right)  !},\ \ h>0$, $n\in\mathbb{N}$;

\item[(vii)] $L^{-1}\left\{  e^{-sh}F\left(  s\right)  \right\}  =f\left(
t-h\right)  ,\ \ h\geq0$;

\item[(viii)] $L^{-1}\left\{  e^{-sh}s^{\alpha\gamma-\beta}\left(  s^{\alpha
}I-A\right)  ^{-\gamma}\right\}  =\left(  t-h\right)  ^{\beta-1}%
E_{\alpha,\beta}^{\gamma}\left(  A\left(  t-h\right)  ^{\alpha}\right)  $,
$t\geq h$, where $E_{\alpha,\beta}^{\gamma}\left(  z\right)  =%
{\displaystyle\sum\limits_{k=0}^{\infty}}
\dfrac{z^{\alpha k}}{\Gamma\left(  \alpha k+\beta\right)  }\dfrac{\left(
\gamma\right)  _{k}}{k!}$ is the three parameter Mittag-Leffler function,
$\alpha,\beta,\gamma>0$, $t\in\mathbb{R}$ and $\left(  \gamma\right)
_{k}:=\gamma\left(  \gamma+1\right)  ...\left(  \gamma+k-1\right)  .$
\end{enumerate}
\end{lemma}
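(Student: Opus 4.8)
The plan is to dispatch the eight identities one at a time, grouping the bookkeeping items and reserving the genuine work for (iv) and (viii). Items (i), (iii), (v) are standard --- linearity of the integral, the Fubini argument for the convolution theorem, and the defining normalization $\int_{0}^{\infty}e^{-st}\delta(t)\,dt=1$ of the Dirac distribution --- and they pass to the vector/matrix-valued setting componentwise. Item (vii) is the second shift theorem, obtained by the substitution $\tau=t-h$ in the defining integral; then (ii) is (vii) applied to $f\equiv 1$ together with $L\{1\}=s^{-1}$, and (vi) is (vii) applied to $f(t)=t^{n-1}/(n-1)!$ with $h$ replaced by $nh$, together with $L\{t^{n-1}/(n-1)!\}=s^{-n}$.

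For (iv) I would first isolate two classical one-dimensional rules: $L\{I_{0,t}^{\gamma}g(t)\}=s^{-\gamma}G(s)$, immediate from (iii) since $I_{0,t}^{\gamma}g$ is the convolution of $g$ with $t^{\gamma-1}/\Gamma(\gamma)$, and $L\{g^{(m)}(t)\}=s^{m}G(s)-\sum_{k=0}^{m-1}s^{m-1-k}g^{(k)}(0)$. Writing $g:=I_{0,t}^{(1-\nu)(m-\mu)}f$, so that $D_{0,t}^{\mu,\nu}f=I_{0,t}^{\nu(m-\mu)}g^{(m)}$ by definition, applying the two rules in succession, and using $g^{(k)}(0)=I_{0,t}^{(1-\nu)(m-\mu)-k}f(0)$ (which reads as a fractional integral when the order is positive and as a Riemann--Liouville derivative when it is negative), one is left with the two purely arithmetic simplifications $m-\nu(m-\mu)-(1-\nu)(m-\mu)=\mu$ and $m-1-k-\nu(m-\mu)=m(1-\nu)+\mu\nu-k-1$, which reproduce exactly the stated exponents.

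Item (viii) is the substantive one. Expand the three-parameter Mittag-Leffler matrix function as a power series,
\[
(t-h)^{\beta-1}E_{\alpha,\beta}^{\gamma}\!\left(A(t-h)^{\alpha}\right)=\sum_{k=0}^{\infty}\frac{(\gamma)_{k}}{k!}\,\frac{A^{k}(t-h)_{+}^{\alpha k+\beta-1}}{\Gamma(\alpha k+\beta)},
\]
which on every compact $t$-interval converges absolutely and uniformly in operator norm. Hence for $\operatorname{Re}(s)$ large the Laplace integral may be interchanged with the sum by dominated convergence, and term by term, using $L\{(t-h)_{+}^{\alpha k+\beta-1}\}=e^{-sh}\Gamma(\alpha k+\beta)s^{-(\alpha k+\beta)}$ (shift theorem plus $L\{t^{\rho-1}\}=\Gamma(\rho)s^{-\rho}$), the transform collapses to $e^{-sh}s^{-\beta}\sum_{k\ge0}\frac{(\gamma)_{k}}{k!}(s^{-\alpha}A)^{k}$. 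The last ingredient is the matrix binomial expansion $\sum_{k\ge0}\frac{(\gamma)_{k}}{k!}Z^{k}=(I-Z)^{-\gamma}$, valid in operator norm whenever $\|Z\|<1$ (and taken as the definition of the matrix power for non-integer $\gamma$); applying it with $Z=s^{-\alpha}A$, which has norm $<1$ once $\operatorname{Re}(s)$ is large, and rewriting $s^{-\beta}(I-s^{-\alpha}A)^{-\gamma}=s^{\alpha\gamma-\beta}(s^{\alpha}I-A)^{-\gamma}$ gives the claim; if the identity is wanted for every $s$ with large real part rather than only those with $\|s^{-\alpha}A\|<1$, uniqueness of analytic continuation finishes the job.

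The only genuine obstacle is (viii): justifying the interchange of the matrix-valued infinite series with the Laplace integral and establishing (with the correct normalization) the Pochhammer-weighted matrix binomial expansion of $(s^{\alpha}I-A)^{-\gamma}$. Once those are in place, the remaining seven items reduce to the one-dimensional transform table, the shift theorem, and the arithmetic of exponents; all manipulations are carried out for $\operatorname{Re}(s)$ sufficiently large, exactly as the statement allows, so no convergence issue at the abscissa boundary arises.
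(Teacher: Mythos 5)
Your proof is correct, but there is nothing in the paper to compare it against: Lemma~\ref{lem:1} is stated as a toolbox of standard Laplace-transform facts and the author supplies no proof at all, simply taking items (i)--(viii) from the classical literature (the convolution and shift theorems, the known transform of the Hilfer derivative, and the Prabhakar-type formula for $s^{\alpha\gamma-\beta}(s^{\alpha}I-A)^{-\gamma}$). Your organization --- deriving (ii) and (vi) from the second shift theorem (vii), reducing (iv) to the two one-dimensional rules $L\{I_{0,t}^{\gamma}g\}=s^{-\gamma}G(s)$ and $L\{g^{(m)}\}=s^{m}G(s)-\sum_{k}s^{m-1-k}g^{(k)}(0)$ applied to $g=I_{0,t}^{(1-\nu)(m-\mu)}f$, and proving (viii) by termwise transformation of the series followed by the Pochhammer binomial identity $\sum_{k\ge0}\tfrac{(\gamma)_{k}}{k!}Z^{k}=(I-Z)^{-\gamma}$ --- is the standard route and the exponent arithmetic in (iv) checks out. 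Two small remarks. First, in (viii) the interchange of sum and integral is better justified by Tonelli than by ``dominated convergence on compacts'': the integral runs over $[0,\infty)$, and the clean argument is that $\sum_{k}\tfrac{(\gamma)_{k}}{k!}\|A\|^{k}\int_{0}^{\infty}e^{-\sigma t}\tfrac{t^{\alpha k+\beta-1}}{\Gamma(\alpha k+\beta)}\,dt=\sigma^{-\beta}\sum_{k}\tfrac{(\gamma)_{k}}{k!}(\sigma^{-\alpha}\|A\|)^{k}<\infty$ once $\sigma^{-\alpha}\|A\|<1$, which licenses the swap. Second, you have (correctly, and silently) repaired a typo in the statement: the paper writes $E_{\alpha,\beta}^{\gamma}(z)=\sum_{k}\tfrac{z^{\alpha k}}{\Gamma(\alpha k+\beta)}\tfrac{(\gamma)_{k}}{k!}$, whereas the expansion your computation uses (and the one consistent with the left-hand side of (viii) and with the paper's later use in Lemma~\ref{lem:2}) has $z^{k}$, i.e.\ $A^{k}(t-h)^{\alpha k}$; it would be worth flagging that explicitly rather than passing over it.
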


\begin{definition}
\label{def:2}System (\ref{eq1}) is Ulam-Hyers stable on $\left[  0,T\right]  $
if there exists $C>0$ such that for any $\varepsilon>0$ and for any function
$z^{\ast}\left(  t\right)  $ satisfying inequality%
\begin{equation}
\left\Vert D_{0,t}^{\mu,\nu}z^{\ast}\left(  t\right)  +Az^{\ast}\left(
t\right)  +\Omega z^{\ast}\left(  t-h\right)  -f\left(  t\right)  \right\Vert
\leq\varepsilon\label{inq1}%
\end{equation}
and the initial conditions in (\ref{eq1}), there is a solution $z\left(
t\right)  $of (\ref{eq1}) such that
\[
\left\Vert z^{\ast}\left(  t\right)  -z\left(  t\right)  \right\Vert \leq
C\varepsilon
\]
for every $t\in\left[  0,T\right]  .$
\end{definition}

We reduce the notations of $Y_{\mu,\gamma}^{h}\left(  A,\Omega;t\right)  $,
$Q_{k,m}^{A,\Omega}$ to a mere $Y_{\mu,\gamma}^{h}\left(  t\right)  $,
$Q_{k,m}$ in the sequel.

\begin{theorem}
The following formulae hold:

\begin{enumerate}
\item The function $Y_{\mu,\gamma}^{h}\left(  \cdot\right)  $ is continuous on
$\left(  0,+\infty\right)  .$

\item $\frac{d}{dt}Y_{\mu,\gamma+1}^{h}\left(  t\right)  =Y_{\mu,\gamma}%
^{h}\left(  t\right)  ,\ \ \frac{d}{dt}Y_{\mu,\gamma+2}^{h}\left(  t\right)
=Y_{\mu,\gamma+1}^{h}\left(  t\right)  $ for all $t\in\mathbb{R}$.

\item $D_{0,t}^{\mu,\nu}Y_{\mu,\gamma}^{h}=-AY_{\mu,\gamma}^{h}\left(
t\right)  -\Omega Y_{\mu,\gamma}^{h}\left(  t-h\right)  .$
\end{enumerate}
\end{theorem}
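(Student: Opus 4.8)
I would treat the three items in order; the first two are soft analysis once a size bound on the kernel $Q_{k,m}$ is in hand, and the third is the substantive one.

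\textbf{Item 1 (continuity).} First I would record the estimate
$\|Q_{k,m}\|\le\binom{k}{m}\|A\|^{k-m}\|\Omega\|^{m}$, proved by induction on $k$ from
$Q_{k+1,m}=AQ_{k,m}+\Omega Q_{k,m-1}$, $Q_{k,0}=A^{k}$, $Q_{k,-1}=\Theta$ (Pascal's rule), so in particular $Q_{k,m}=\Theta$ for $m>k$ and $\sum_{m=0}^{k}\|Q_{k,m}\|\le(\|A\|+\|\Omega\|)^{k}$. On a compact $[t_{0},t_{1}]\subset(0,\infty)$ the $k=0$ part of the double series is the single fixed continuous function $\Gamma(\gamma)^{-1}t^{\gamma-1}I$, while for $k\ge1$ the exponent satisfies $k\mu+\gamma-1\ge\mu+\gamma-1>0$, so each summand is continuous and $\big\|\sum_{m}(-1)^{k}Q_{k,m}\tfrac{(t-mh)_{+}^{k\mu+\gamma-1}}{\Gamma(k\mu+\gamma)}\big\|\le\tfrac{(\|A\|+\|\Omega\|)^{k}\,\max\{1,t_{1}\}^{k\mu+\gamma}}{\Gamma(k\mu+\gamma)}$, whose sum over $k$ converges (a Mittag-Leffler-type series). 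By the Weierstrass $M$-test the series converges uniformly on $[t_{0},t_{1}]$, hence $Y_{\mu,\gamma}^{h}$ is continuous on $(0,\infty)$.

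\textbf{Item 2 (derivative recursions).} Since $\frac{d}{dt}\frac{(t-mh)_{+}^{\alpha}}{\Gamma(\alpha+1)}=\frac{(t-mh)_{+}^{\alpha-1}}{\Gamma(\alpha)}$ for $\alpha>0$, differentiating the series for $Y_{\mu,\gamma+1}^{h}$ termwise produces exactly the series for $Y_{\mu,\gamma}^{h}$; the differentiated series again converges uniformly on compact subsets of $(0,\infty)$ by the estimate of Item 1 (with $\gamma$ in place of $\gamma+1$), which justifies termwise differentiation and gives $\frac{d}{dt}Y_{\mu,\gamma+1}^{h}=Y_{\mu,\gamma}^{h}$; iterating once gives $\frac{d}{dt}Y_{\mu,\gamma+2}^{h}=Y_{\mu,\gamma+1}^{h}$ (the statement being trivial for $t<0$, where $Y\equiv\Theta$).

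\textbf{Item 3 (the fractional differential identity).} My preferred route is again termwise. The crucial computation is that the Hilfer derivative acts on a shifted power cleanly: because $(\cdot-mh)_{+}^{\alpha}$ vanishes on $[0,mh]$, a change of variables and the Beta integral give $I_{0,t}^{\beta}\frac{(\cdot-mh)_{+}^{\alpha}}{\Gamma(\alpha+1)}=\frac{(t-mh)_{+}^{\alpha+\beta}}{\Gamma(\alpha+\beta+1)}$, and then using $D_{0,t}^{\mu,\nu}=I_{0,t}^{\nu(2-\mu)}\frac{d^{2}}{dt^{2}}I_{0,t}^{(1-\nu)(2-\mu)}$ together with $(1-\nu)(2-\mu)+\nu(2-\mu)=2-\mu$ everything telescopes to $D_{0,t}^{\mu,\nu}\frac{(\cdot-mh)_{+}^{\alpha}}{\Gamma(\alpha+1)}=\frac{(t-mh)_{+}^{\alpha-\mu}}{\Gamma(\alpha-\mu+1)}$, with the convention $1/\Gamma(\text{non-positive integer})=0$. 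Applying this to each term of $Y_{\mu,\gamma}^{h}$ (the interchange with the sum justified as in Item 1) yields $\sum_{m,k}(-1)^{k}Q_{k,m}\frac{(t-mh)_{+}^{(k-1)\mu+\gamma-1}}{\Gamma((k-1)\mu+\gamma)}$. I would then split off $k=0$, shift $k\mapsto k+1$ in the remainder, and substitute $Q_{k+1,m}=AQ_{k,m}+\Omega Q_{k,m-1}$: the $A$-part reassembles $-AY_{\mu,\gamma}^{h}(t)$, and in the $\Omega$-part the shift $m\mapsto m+1$ (using $Q_{k,-1}=\Theta$) converts $(t-(m+1)h)_{+}$ into $((t-h)-mh)_{+}$ and reassembles $-\Omega Y_{\mu,\gamma}^{h}(t-h)$, giving the asserted identity.

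\textbf{Main obstacle.} The delicate points are (a) justifying that $D_{0,t}^{\mu,\nu}$ may be moved under the double sum, which requires a locally uniform bound for the termwise Hilfer derivatives — again supplied by the kernel estimate — and (b) the $k=0$ summand, where the exponent $\gamma-1-\mu$ is negative; this is the only place the identity is sensitive to the value of $\gamma$, and is exactly where the convention $1/\Gamma(\text{non-positive integer})=0$ (equivalently, the vanishing of the corresponding initial-value term) does the work. As a cross-check one may instead run the Laplace-transform proof: summing the generating function via the $Q$-recursion gives $\sum_{k,m}(-1)^{k}Q_{k,m}x^{m}y^{k}=(I+yA+xy\Omega)^{-1}$, whence $L\{Y_{\mu,\gamma}^{h}\}(s)=s^{\mu-\gamma}\bigl(s^{\mu}I+A+e^{-hs}\Omega\bigr)^{-1}$, and combining this with Lemma~\ref{lem:1}(iii),(iv),(vii) recovers the same identity, with the initial-value terms of Lemma~\ref{lem:1}(iv) playing the role of the $k=0$ summand above.
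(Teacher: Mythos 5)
Your proposal is correct and follows essentially the same route as the paper, whose entire proof consists of declaring items 1 and 2 obvious and citing the power rule $D_{0,t}^{\mu,\nu}t^{\alpha}=\frac{\Gamma(\alpha+1)}{\Gamma(\alpha-\mu+1)}t^{\alpha-\mu}$ for item 3; you simply supply the kernel estimate, the uniform-convergence justifications, and the index shifts via $Q_{k+1,m}=AQ_{k,m}+\Omega Q_{k,m-1}$ that the paper omits. One remark: as your own computation makes explicit, the $k=0$ term contributes $t^{\gamma-\mu-1}/\Gamma(\gamma-\mu)\,I$, so item 3 as stated holds only when $\gamma-\mu$ is a non-positive integer (e.g.\ $\gamma=\mu$, the value used in the variation-of-constants term); this restriction is left implicit in the paper, and your treatment surfaces it rather than introducing a gap.
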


\begin{proof}
The proofs of the properties $1.$ and $2.$ are obvious. Proof of the property
$3.$ is based on the following formula:%
\[
D_{0,t}^{\mu,\nu}t^{\alpha}=\frac{\Gamma\left(  \alpha+1\right)  }%
{\Gamma\left(  \alpha-\mu+1\right)  }t^{\alpha-\mu},\ \ t>0,\ n-1<\mu\leq
n,\ 0\leq\nu\leq1,\ \alpha>-1.
\]
\end{proof}

The main tool we use in this paper is the Laplace transform $F\left(
s\right)  :=L\left\{  f\left(  t\right)  \right\}  =\int_{0}^{\infty}%
e^{-st}f\left(  t\right)  dt,$ $\operatorname{Re}s>a$, which is defined for an
exponentially bounded function $f.$

\begin{lemma}
\label{lem:2}We have%
\begin{align*}
&  L^{-1}\left\{  \left(  e^{-hs}\left(  s^{\mu}I+A\right)  ^{-1}%
\Omega\right)  ^{m}s^{\mu-\gamma}\left(  s^{\mu}I+A\right)  ^{-1}\right\} \\
&  =%
{\displaystyle\sum\limits_{k=0}^{\infty}}
\left(  -1\right)  ^{k-m}Q_{k,m}\dfrac{\left(  t-mh\right)  _{+}^{k\mu
+\gamma-1}}{\Gamma\left(  k\mu+\gamma\right)  },
\end{align*}
where $Q_{k,m}$ is defined in (\ref{qq1}).
\end{lemma}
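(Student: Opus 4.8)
The plan is to compute the left-hand side Laplace inverse by expanding each of the factors $(s^{\mu}I+A)^{-1}$ into a Neumann series and collecting terms, then to identify the emerging coefficients with the kernel $Q_{k,m}$ via its recursion. First I would write $(s^{\mu}I+A)^{-1}=\sum_{\ell=0}^{\infty}(-1)^{\ell}A^{\ell}s^{-\mu(\ell+1)}$, valid for sufficiently large $\operatorname{Re}(s)$ so that $\|A\|\,|s|^{-\mu}<1$. Substituting this into the expression
\[
\bigl(e^{-hs}(s^{\mu}I+A)^{-1}\Omega\bigr)^{m}s^{\mu-\gamma}(s^{\mu}I+A)^{-1}
=e^{-mhs}\,s^{\mu-\gamma}\,\underbrace{(s^{\mu}I+A)^{-1}\Omega\cdots(s^{\mu}I+A)^{-1}\Omega}_{m\text{ factors}}\,(s^{\mu}I+A)^{-1},
\]
each of the $m+1$ resolvent factors contributes an independent summation index $\ell_{0},\ell_{1},\dots,\ell_{m}\ge 0$. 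Setting $k=m+\sum_{i=0}^{m}\ell_{i}$ (so that $k\ge m$) and combining the powers of $s$, the coefficient of $e^{-mhs}s^{-k\mu-\gamma}$ that multiplies the matrix product $A^{\ell_{0}}\Omega A^{\ell_{1}}\Omega\cdots\Omega A^{\ell_{m}}$ carries a sign $(-1)^{\sum\ell_{i}}=(-1)^{k-m}$.

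The key step is then to recognize that
\[
\sum_{\substack{\ell_{0}+\cdots+\ell_{m}=k-m\\ \ell_{i}\ge 0}}A^{\ell_{0}}\Omega A^{\ell_{1}}\Omega\cdots\Omega A^{\ell_{m}}=Q_{k,m}.
\]
I would prove this identity by induction on $m$. For $m=0$ the sum is just $A^{k}$, which matches $Q_{k,0}=A^{k}$; for $k=0$ and $m\ge 1$ there are no admissible tuples (since $k\ge m$ fails) and the sum is $\Theta=Q_{0,m}$, while $Q_{k,-1}=\Theta$ is vacuous. For the inductive step, I would peel off the last block: write $\ell_{m}=j'\ge 0$ and note that the remaining indices $\ell_{0},\dots,\ell_{m-1}$ sum to $k-m-j'=(k-j'-1)-(m-1)$, so the sum equals $\sum_{j'\ge 0}\bigl(\sum_{\text{tuples of length }m}A^{\cdots}\bigr)\Omega A^{j'}=\sum_{j'\ge 0}Q_{k-j'-1,m-1}\,\Omega A^{j'}$; re-indexing with $j=k-j'$ gives $\sum_{j=m}^{k}Q_{j-1,m-1}\,\Omega A^{k-j}$. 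This is the transpose-ordered analogue of the defining relation (\ref{qq1}); using instead the equivalent recursion $Q_{k+1,m}=AQ_{k,m}+\Omega Q_{k,m-1}$ stated just after Definition \ref{def:sine} (peeling off the \emph{first} block $\ell_{0}$ rather than the last) yields exactly $Q_{k,m}=\sum_{j=m}^{k}A^{k-j}\Omega Q_{j-1,m-1}$, matching (\ref{qq1}) on the nose.

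Once the coefficient identification is in place, the expression becomes
\[
\sum_{k=m}^{\infty}(-1)^{k-m}Q_{k,m}\,e^{-mhs}\,s^{-(k\mu+\gamma)},
\]
and termwise inversion using Lemma \ref{lem:1}, in particular the rule $L^{-1}\{e^{-mhs}s^{-\beta}\}=\dfrac{(t-mh)_{+}^{\beta-1}}{\Gamma(\beta)}$ with $\beta=k\mu+\gamma$ (this is the standard shift property (vii) combined with $L^{-1}\{s^{-\beta}\}=t^{\beta-1}/\Gamma(\beta)$, consistent with (vi) in the integer case), produces precisely
\[
\sum_{k=0}^{\infty}(-1)^{k-m}Q_{k,m}\,\dfrac{(t-mh)_{+}^{k\mu+\gamma-1}}{\Gamma(k\mu+\gamma)},
\]
where the terms with $k<m$ vanish since $Q_{k,m}=\Theta$ there. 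I expect the main obstacle to be purely a matter of bookkeeping rather than deep analysis: one must justify interchanging the (absolutely convergent, for large $\operatorname{Re}(s)$) multiple Neumann series with the inverse Laplace transform, and track the noncommutative ordering of $A$ and $\Omega$ carefully so that the block-sum collapses to $Q_{k,m}$ under the correct recursion rather than its mirror image. A brief remark on the radius of convergence $\|A\|\,|s|^{-\mu}<1$ together with term-by-term invertibility of power series of the form $\sum c_{k}s^{-(k\mu+\gamma)}$ closes the argument.
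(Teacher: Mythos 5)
Your proof is correct, but it takes a genuinely different route from the paper. The paper proves the lemma by induction on $m$ working in the time domain: the base case is Lemma \ref{lem:1}(viii), and the inductive step convolves $L^{-1}\{e^{-hs}(s^{\mu}I+A)^{-1}\Omega\}=(t-h)_{+}^{\mu-1}E_{\mu,\mu}(-A(t-h)^{\mu})\Omega$ with the $m$-th stage, expands both factors as power series, evaluates the resulting Beta-type integral $\int(t-s-h)^{k\mu+\mu-1}(s-mh)^{j\mu+m\mu+\gamma-1}\,ds$, and regroups the double sum along diagonals so that the recursion (\ref{qq1}) appears. You instead stay entirely in the frequency domain: you expand every resolvent as a Neumann series in $s^{-\mu}$, multiply out the $m+1$ factors, read off the coefficient of $e^{-mhs}s^{-k\mu-\gamma}$ as $(-1)^{k-m}$ times the noncommutative block sum $\sum_{\ell_{0}+\cdots+\ell_{m}=k-m}A^{\ell_{0}}\Omega\cdots\Omega A^{\ell_{m}}$, prove by a separate induction (correctly peeling off the \emph{first} block so as to land on (\ref{qq1}) rather than its mirror image) that this sum equals $Q_{k,m}$, and then invert termwise via $L^{-1}\{e^{-mhs}s^{-\beta}\}=(t-mh)_{+}^{\beta-1}/\Gamma(\beta)$, which is Lemma \ref{lem:1}(viii) with $A=\Theta$. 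Your sign and exponent bookkeeping check out: $k=m+\sum\ell_{i}$ gives the sign $(-1)^{k-m}$ and the power $s^{-k\mu-\gamma}$ after combining with $s^{\mu-\gamma}$, and the terms with $k<m$ vanish because $Q_{k,m}=\Theta$ there. What your route buys is an explicit combinatorial description of $Q_{k,m}$ as the sum of all words in $A,\Omega$ containing exactly $m$ copies of $\Omega$ and total $A$-degree $k-m$, cleanly separating the algebra from the analysis; the paper's route keeps the Mittag-Leffler/convolution structure that it reuses later. Both arguments rest on the same (routine but unproved in either version) interchange of absolutely convergent series with the inverse Laplace transform, which you at least flag explicitly.
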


\begin{proof}
For $n=0$ by Lemma \ref{lem:1}(viii) we have%
\begin{align*}
L^{-1}\left\{  s^{\mu-\gamma}\left(  s^{\mu}I+A\right)  ^{-1}\right\}   &
=t^{\gamma-1}E_{\mu,\gamma}\left(  -At^{\mu}\right)  ,\ \ \ \\
L^{-1}\left\{  e^{-sh}\left(  s^{\mu}I+A\right)  ^{-\gamma}\right\}   &
=\left(  t-h\right)  _{+}^{\mu-1}E_{\mu,\mu}\left(  -A\left(  t-h\right)
^{\mu}\right)  ,\ \ \ t\geq h.
\end{align*}
Let $Q_{k,0}=A^{k}$. For $n=1$, we use the convolution property (Lemma
\ref{lem:1}(iii)) of the Laplace transform to get
\begin{align*}
&  L^{-1}\left\{  e^{-hs}\left(  s^{\mu}I+A\right)  ^{-1}\Omega s^{\mu-\gamma
}\left(  s^{\mu}I+A\right)  ^{-1}\right\} \\
&  =L^{-1}\left\{  e^{-hs}\left(  s^{\mu}I+A\right)  ^{-1}\Omega\right\}  \ast
L^{-1}\left\{  s^{\mu-\gamma}\left(  s^{\mu}I+A\right)  ^{-1}\right\} \\
&  =\int_{0}^{t}\left(  s-h\right)  _{+}^{\mu-1}E_{\mu,\mu}\left(  -A\left(
s-h\right)  ^{\mu}\right)  \Omega\left(  t-s\right)  ^{\gamma-1}E_{\mu,\gamma
}\left(  -A\left(  t-s\right)  ^{\mu}\right)  ds\\
&  =%
{\displaystyle\sum\limits_{k=0}^{\infty}}
{\displaystyle\sum\limits_{j=0}^{\infty}}
\frac{\left(  -1\right)  ^{k}A^{k}\Omega\left(  -1\right)  ^{j}A^{j}}%
{\Gamma\left(  \mu k+\mu\right)  \Gamma\left(  \mu j+\gamma\right)  }\int
_{h}^{t}\left(  s-h\right)  ^{\mu k+\mu-1}\left(  t-s\right)  ^{\mu
j+\gamma-1}ds\\
&  =%
{\displaystyle\sum\limits_{k=0}^{\infty}}
{\displaystyle\sum\limits_{j=0}^{\infty}}
\left(  -1\right)  ^{k}A^{k}\Omega\left(  -1\right)  ^{j}A^{j}\frac{\left(
t-h\right)  _{+}^{\mu k+\mu j+\mu+\gamma-1}}{\Gamma\left(  \mu k+\mu
j+\mu+\gamma\right)  }\\
&  =%
{\displaystyle\sum\limits_{k=0}^{\infty}}
\left(  -1\right)  ^{k}%
{\displaystyle\sum\limits_{j=0}^{k}}
A^{k-j}\Omega A^{j}\frac{\left(  t-h\right)  _{+}^{\mu k+\mu+\gamma-1}}%
{\Gamma\left(  \mu k+\mu+\gamma\right)  }\\
&  =%
{\displaystyle\sum\limits_{k=1}^{\infty}}
\left(  -1\right)  ^{k-1}%
{\displaystyle\sum\limits_{j=0}^{k-1}}
A^{k-1-j}\Omega A^{j}\frac{\left(  t-h\right)  _{+}^{\mu k+\gamma-1}}%
{\Gamma\left(  \mu k+\gamma\right)  }.
\end{align*}
Now, to use the mathematical induction, suppose that it holds for $n=m$. Then
convolution property yields%
\begin{gather*}
L^{-1}\left\{  \left(  e^{-hs}\left(  s^{\mu}I+A\right)  ^{-1}\Omega\right)
^{m+1}s^{\mu-\gamma}\left(  s^{\mu}I+A\right)  ^{-1}\right\} \\
=L^{-1}\left\{  e^{-hs}\left(  s^{\mu}I+A\right)  ^{-1}\Omega\right\}  \ast
L^{-1}\left\{  \left(  e^{-hs}s^{-\beta}\left(  s^{\mu}I+A\right)  ^{-1}%
\Omega\right)  ^{m}s^{\mu-\gamma}\left(  s^{\mu}I+A\right)  ^{-1}\right\} \\
=\int_{h}^{t}\left(  s-h\right)  _{+}^{\mu-1}E_{\mu,\mu}\left(  -A\left(
s-h\right)  ^{\mu}\right)  \Omega%
{\displaystyle\sum\limits_{j=0}^{\infty}}
\left(  -1\right)  ^{j}Q_{j+m,m}\frac{\left(  t-s-mh\right)  _{+}^{\mu j+\mu
m+\gamma-1}}{\Gamma\left(  \mu j+\mu m+\mu\right)  }ds\\
=%
{\displaystyle\sum\limits_{k=0}^{\infty}}
{\displaystyle\sum\limits_{j=0}^{\infty}}
\left(  -1\right)  ^{k}A^{k}\Omega\left(  -1\right)  ^{j}Q_{j+m,m}%
\int\limits_{h}^{t}\dfrac{\left(  t-s-h\right)  _{+}^{k\mu+\mu-1}}%
{\Gamma\left(  k\mu+\mu\right)  }\frac{\left(  s-mh\right)  _{+}^{\mu j+\mu
m+\gamma-1}}{\Gamma\left(  \mu j+\mu m+\gamma\right)  }ds\\
=%
{\displaystyle\sum\limits_{k=0}^{\infty}}
{\displaystyle\sum\limits_{j=0}^{\infty}}
\left(  -1\right)  ^{k}A^{k}\Omega\left(  -1\right)  ^{j}Q_{j+m,m}%
\int\limits_{mh}^{t-h}\dfrac{\left(  t-s-h\right)  ^{k\mu+\mu-1}}%
{\Gamma\left(  k\mu+\mu\right)  }\frac{\left(  s-mh\right)  ^{\mu j+\mu
m+\gamma-1}}{\Gamma\left(  \mu j+\mu m+\gamma\right)  }ds\\
=%
{\displaystyle\sum\limits_{k=0}^{\infty}}
{\displaystyle\sum\limits_{j=0}^{\infty}}
\left(  -1\right)  ^{k}A^{k}\Omega\left(  -1\right)  ^{j}Q_{j+m,m}%
\dfrac{\left(  t-\left(  m+1\right)  h\right)  _{+}^{k\mu+j\mu+\left(
m+1\right)  \mu+\gamma-1}}{\Gamma\left(  k\mu+j\mu+\left(  m+1\right)
\mu+\gamma\right)  }\\
=%
{\displaystyle\sum\limits_{k=m+1}^{\infty}}
\left(  -1\right)  ^{k-m-1}%
{\displaystyle\sum\limits_{j=0}^{k-m-1}}
A^{k-j}\Omega Q_{j+m,m}\dfrac{\left(  t-\left(  m+1\right)  h\right)
_{+}^{k\mu+j\mu+\left(  m+1\right)  \mu+\gamma-1}}{\Gamma\left(  k\mu
+j\mu+\left(  m+1\right)  \mu+\gamma\right)  }%
\end{gather*}
what was to be proved.
\end{proof}

\begin{lemma}
\label{lem:4}We have%
\begin{align*}
Y_{\mu,\gamma}^{h}\left(  t\right)   &  :=L^{-1}\left\{  s^{\mu-\gamma}\left(
s^{\mu}I+A+\Omega e^{-hs}\right)  ^{-1}\right\} \\
&  =%
{\displaystyle\sum_{m=0}^{\infty}}
{\displaystyle\sum\limits_{k=0}^{\infty}}
\left(  -1\right)  ^{k}Q_{k,m}\dfrac{\left(  t-mh\right)  _{+}^{k\mu+\gamma
-1}}{\Gamma\left(  k\mu+\gamma\right)  }.
\end{align*}

\end{lemma}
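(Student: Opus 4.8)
The plan is to expand the resolvent $\left(s^{\mu}I+A+\Omega e^{-hs}\right)^{-1}$ as a Neumann series in the perturbation $\Omega e^{-hs}$ around the operator $s^{\mu}I+A$, and then invert term by term using Lemma~\ref{lem:2}. First I would write
\[
\left(s^{\mu}I+A+\Omega e^{-hs}\right)^{-1}
=\left(s^{\mu}I+A\right)^{-1}\left(I+\Omega e^{-hs}\left(s^{\mu}I+A\right)^{-1}\right)^{-1},
\]
and expand the second factor as $\sum_{m=0}^{\infty}(-1)^{m}\left(\Omega e^{-hs}\left(s^{\mu}I+A\right)^{-1}\right)^{m}$, which converges for sufficiently large $\operatorname{Re}(s)$ since then $\bigl\|\Omega e^{-hs}\left(s^{\mu}I+A\right)^{-1}\bigr\|<1$ (the factor $e^{-hs}$ is bounded, $\left(s^{\mu}I+A\right)^{-1}\to 0$). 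Multiplying back by $s^{\mu-\gamma}\left(s^{\mu}I+A\right)^{-1}$ gives
\[
s^{\mu-\gamma}\left(s^{\mu}I+A+\Omega e^{-hs}\right)^{-1}
=\sum_{m=0}^{\infty}(-1)^{m}\left(e^{-hs}\left(s^{\mu}I+A\right)^{-1}\Omega\right)^{m}s^{\mu-\gamma}\left(s^{\mu}I+A\right)^{-1},
\]
after moving the scalar $e^{-hs}$ across and regrouping so that each summand matches exactly the expression whose inverse Laplace transform is computed in Lemma~\ref{lem:2}.

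Next I would apply $L^{-1}$ term by term. By Lemma~\ref{lem:2}, the $m$-th term transforms to $\sum_{k=0}^{\infty}(-1)^{k-m}Q_{k,m}\dfrac{(t-mh)_{+}^{k\mu+\gamma-1}}{\Gamma(k\mu+\gamma)}$, so multiplying by the outer $(-1)^{m}$ and summing over $m$ yields
\[
\sum_{m=0}^{\infty}\sum_{k=0}^{\infty}(-1)^{k}Q_{k,m}\dfrac{(t-mh)_{+}^{k\mu+\gamma-1}}{\Gamma(k\mu+\gamma)},
\]
which is precisely $Y_{\mu,\gamma}^{h}(t)$ as in Definition~\ref{def:sine}. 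Note that $Q_{k,m}=\Theta$ for $k<m$, so the inner sum effectively starts at $k=m$ and the double series is, for fixed $t$, a finite sum in $m$ (only $m\le t/h$ contribute because of the $(t-mh)_{+}$ factor), which also guarantees the rearrangement is legitimate.

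The main obstacle is justifying the interchange of $L^{-1}$ with the infinite sum over $m$ (equivalently, that the Neumann series converges in a half-plane uniformly enough to invert termwise). I would handle this by establishing a norm bound on the $m$-th term of the series on a vertical line $\operatorname{Re}(s)=c$ for $c$ large: the factor $e^{-hs}$ contributes $e^{-hmc}$, and $\left(s^{\mu}I+A\right)^{-1}$ contributes a factor decaying like $|s|^{-\mu}$, so the series is dominated by a convergent geometric series uniformly on $\operatorname{Re}(s)=c$; standard results on termwise inversion of Laplace transforms (or, alternatively, uniqueness of the Laplace transform combined with the fact that for each fixed $t$ the time-domain series is a finite sum) then close the argument. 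The remaining bookkeeping — that the exponents and Gamma arguments produced by Lemma~\ref{lem:2} assemble correctly into the stated double series, and that the kernel recursion $Q_{k,m}=AQ_{k-1,m}+\Omega Q_{k-1,m-1}$ is consistent throughout — is routine given the computation already carried out in the proof of Lemma~\ref{lem:2}.
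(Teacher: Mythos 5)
Your proposal is correct and follows essentially the same route as the paper: factor the resolvent as $\left(I+\left(s^{\mu}I+A\right)^{-1}\Omega e^{-hs}\right)^{-1}s^{\mu-\gamma}\left(s^{\mu}I+A\right)^{-1}$, expand in a Neumann series, and invert term by term via Lemma~\ref{lem:2}, with the signs $(-1)^{m}(-1)^{k-m}=(-1)^{k}$ assembling into the stated double series. Your added justification of the termwise inversion (geometric domination on a vertical line, plus the observation that for fixed $t$ only finitely many $m$ contribute) is more careful than the paper, which passes $L^{-1}$ inside the sum without comment.
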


\begin{proof}
It is easy to see that%
\begin{align*}
&  L^{-1}\left\{  s^{\mu-\gamma}\left(  s^{\mu}I+A+\Omega e^{-hs}\right)
^{-1}\right\} \\
&  =L^{-1}\left\{  s^{\mu-\gamma}\left(  \left(  s^{\mu}I+A\right)  I+\left(
s^{\mu}I+A\right)  \left(  s^{\mu}I+A\right)  ^{-1}\Omega e^{-hs}\right)
^{-1}\right\} \\
&  =L^{-1}\left\{  \left(  I+\left(  s^{\mu}I+A\right)  ^{-1}\Omega
e^{-hs}\right)  ^{-1}s^{\mu-\gamma}\left(  s^{\mu}I+A\right)  ^{-1}\right\} \\
&  =L^{-1}\left\{
{\displaystyle\sum_{m=0}^{\infty}}
e^{-mhs}\left(  -1\right)  ^{n}\left(  \left(  s^{\mu}I+A\right)  ^{-1}%
\Omega\right)  ^{m}s^{\mu-\gamma}\left(  s^{\mu}I+A\right)  ^{-1}\right\} \\
&  =%
{\displaystyle\sum_{m=0}^{\infty}}
L^{-1}\left\{  e^{-mhs}\left(  -1\right)  ^{m}\left(  \left(  s^{\mu
}I+A\right)  ^{-1}\Omega\right)  ^{m}s^{\mu-\gamma}\left(  s^{\mu}I+A\right)
^{-1}\right\}  .
\end{align*}
Hence, by Lemma \ref{lem:2} we have%
\[
L^{-1}\left\{  s^{\mu-\gamma}\left(  s^{\mu}I+A+\Omega e^{-hs}\right)
^{-1}\right\}  =%
{\displaystyle\sum_{m=0}^{\infty}}
{\displaystyle\sum\limits_{k=0}^{\infty}}
\left(  -1\right)  ^{k}Q_{k,m}\dfrac{\left(  t-mh\right)  _{+}^{k\mu+\gamma
-1}}{\Gamma\left(  k\mu+\gamma\right)  }.
\]
\end{proof}

\section{Exact analytical solution and Ulam-Hyers stability}

We obtain the exact analytical solution of the Hilfer type fractional second
order problem (\ref{eq1}) using delayed Mittag-Leffler type matrix function
$Y_{\mu,\gamma}^{h}\left(  A,\Omega;t\right)  $ and study their Ulam-Hyers stability.

\begin{theorem}
\label{thm:1}The analytical solution of the IVP problem (\ref{eq1}) has the
form%
\begin{align*}
z\left(  t\right)   &  =Y_{\mu,\left(  \mu-2\right)  \left(  1-\nu\right)
+1}^{h}\left(  t\right)  \left(  I_{0,t}^{\left(  1-\nu\right)  \left(
2-\mu\right)  }\varphi\right)  \left(  0\right) \\
&  +Y_{\mu,\left(  \mu-2\right)  \left(  1-\nu\right)  +2}^{h}\left(
t\right)  \left(  I_{0,t}^{\left(  1-\nu\right)  \left(  2-\mu\right)
-1}\varphi\right)  \left(  0\right) \\
&  -\int_{-h}^{0}Y_{\mu,\mu}^{h}\left(  t-s-h\right)  \Omega\varphi\left(
s\right)  ds+\int_{0}^{t}Y_{\mu,\mu}^{h}\left(  t-s\right)  f\left(  s\right)
ds.
\end{align*}

\end{theorem}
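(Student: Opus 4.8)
The plan is to solve the initial value problem \eqref{eq1} by the Laplace transform and then invert, using the delayed Mittag--Leffler matrix function identified in Lemma \ref{lem:4}. First I would apply the Laplace transform to the equation $D_{0,t}^{\mu,\nu}z(t)+Az(t)+\Omega z(t-h)=f(t)$. For the term $\Omega z(t-h)$ I split the shifted argument over $[-h,0]$, where $z=\varphi$, and over $[0,\infty)$, where $z$ is unknown, so that
\[
L\{z(t-h)\}=e^{-hs}Z(s)+\int_{-h}^{0}e^{-s(\tau+h)}\varphi(\tau)\,d\tau ,
\]
writing $Z(s)=L\{z(t)\}$. Using Lemma \ref{lem:1}(iv) with $m=2$, the Hilfer derivative contributes $s^{\mu}Z(s)$ minus the two boundary terms $s^{2(1-\nu)+\mu\nu-1}I_{0,t}^{(1-\nu)(2-\mu)}z(0)$ and $s^{2(1-\nu)+\mu\nu-2}I_{0,t}^{(1-\nu)(2-\mu)-1}z(0)$; by continuity of $\varphi$ these initial quantities equal $(I_{0,t}^{(1-\nu)(2-\mu)}\varphi)(0)$ and $(I_{0,t}^{(1-\nu)(2-\mu)-1}\varphi)(0)$ respectively, matching $a_1,a_0$ in \eqref{eq1}. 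Collecting terms gives
\[
\bigl(s^{\mu}I+A+\Omega e^{-hs}\bigr)Z(s)
= F(s) - \Omega\!\int_{-h}^{0}e^{-s(\tau+h)}\varphi(\tau)\,d\tau
+ s^{\mu-\gamma_1}(I_{0,t}^{(1-\nu)(2-\mu)}\varphi)(0)
+ s^{\mu-\gamma_2}(I_{0,t}^{(1-\nu)(2-\mu)-1}\varphi)(0),
\]
where the exponents are chosen so that $\mu-\gamma_1=2(1-\nu)+\mu\nu-1$ and $\mu-\gamma_2=2(1-\nu)+\mu\nu-2$, i.e. $\gamma_1=(\mu-2)(1-\nu)+1$ and $\gamma_2=(\mu-2)(1-\nu)+2$, exactly the second parameters appearing in the statement.

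Next I would multiply on the left by $\bigl(s^{\mu}I+A+\Omega e^{-hs}\bigr)^{-1}$ and invert term by term. The two initial-data terms become $s^{\mu-\gamma_i}\bigl(s^{\mu}I+A+\Omega e^{-hs}\bigr)^{-1}$ times a constant vector, whose inverse transform is precisely $Y_{\mu,\gamma_i}^{h}(t)$ by Lemma \ref{lem:4}; this yields the first two summands of $z(t)$. For the forcing term $\bigl(s^{\mu}I+A+\Omega e^{-hs}\bigr)^{-1}F(s)$, note that $\bigl(s^{\mu}I+A+\Omega e^{-hs}\bigr)^{-1}=s^{\mu-\mu}\bigl(s^{\mu}I+A+\Omega e^{-hs}\bigr)^{-1}=L\{Y_{\mu,\mu}^{h}(t)\}$ (the case $\gamma=\mu$ of Lemma \ref{lem:4}), so the convolution property Lemma \ref{lem:1}(iii) gives $\int_{0}^{t}Y_{\mu,\mu}^{h}(t-s)f(s)\,ds$. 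For the $\varphi$-term $-\bigl(s^{\mu}I+A+\Omega e^{-hs}\bigr)^{-1}\Omega\int_{-h}^{0}e^{-s(\tau+h)}\varphi(\tau)\,d\tau$, I would pull $\Omega$ and the $\tau$-integral out, observe that $e^{-s(\tau+h)}\bigl(s^{\mu}I+A+\Omega e^{-hs}\bigr)^{-1}$ inverts to the shift $Y_{\mu,\mu}^{h}(t-\tau-h)$ by Lemma \ref{lem:1}(vii), and thereby obtain $-\int_{-h}^{0}Y_{\mu,\mu}^{h}(t-\tau-h)\Omega\varphi(\tau)\,d\tau$. Summing the four pieces reproduces the claimed formula.

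Finally one should check that the formal manipulations are legitimate: exponential boundedness of $z$ (so the Laplace transform and its inversion apply), absolute convergence of the Neumann-type series $\sum_m e^{-mhs}(-1)^m((s^{\mu}I+A)^{-1}\Omega)^m$ used inside Lemma \ref{lem:4}, and the interchange of this series with the inverse transform and with the $\tau$- and $s$-integrals. These are routine for $\operatorname{Re}s$ large, since $\|(s^{\mu}I+A)^{-1}\Omega\|\to 0$ and $|e^{-hs}|<1$ there; I would also confirm that the resulting $z(t)$ indeed satisfies the delay equation and the prescribed initial conditions by substituting back and invoking property 3 of the differentiation theorem, $D_{0,t}^{\mu,\nu}Y_{\mu,\gamma}^{h}=-AY_{\mu,\gamma}^{h}(t)-\Omega Y_{\mu,\gamma}^{h}(t-h)$, together with properties 1 and 2. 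The main obstacle is bookkeeping rather than depth: getting the two boundary exponents from Lemma \ref{lem:1}(iv) to line up correctly with $\gamma_1,\gamma_2$ and handling the split of $z(t-h)$ across the matching point $t=0$ so that the $\varphi$-convolution term emerges with the correct limits $[-h,0]$ and the correct shift $t-s-h$.
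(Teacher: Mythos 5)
Your proposal follows the paper's proof essentially verbatim: apply the Laplace transform, use Lemma \ref{lem:1}(iv) with $m=2$ to produce the two boundary terms with exponents $\mu-\gamma_i$, split $L\{z(t-h)\}$ into $e^{-hs}Z(s)$ plus the history integral over $[-h,0]$, and invert each piece via Lemma \ref{lem:4} and the convolution/shift properties, finishing with the remark that exponential boundedness can be dropped by verifying the formula directly. The approach and all the key steps match; no gaps.
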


\begin{proof}
Assume that the function $f$ and the solution of (\ref{eq1}) is exponentially
bounded. By applying the Laplace transform to the both sides of (\ref{eq1}),
we obtain the following relation%
\[
L\left\{  D_{0,t}^{\mu,\nu}z\left(  t\right)  \right\}  +AL\left\{  z\left(
t\right)  \right\}  +\Omega L\left\{  z\left(  t-h\right)  \right\}
=L\left\{  f\left(  t\right)  \right\}  .
\]
It follows that%
\begin{align*}
\left(  s^{\mu}I+A+\Omega e^{-hs}\right)  Z\left(  s\right)   &  =s^{2\left(
1-\nu\right)  +\mu\nu-1}\left(  I_{0,t}^{\left(  1-\nu\right)  \left(
2-\mu\right)  }\varphi\right)  \left(  0\right) \\
&  +s^{2\left(  1-\nu\right)  +\mu\nu-2}\left(  I_{0,t}^{\left(  1-\nu\right)
\left(  2-\mu\right)  -1}\varphi\right)  \left(  0\right) \\
&  -\Omega\int_{0}^{\infty}e^{-st}z\left(  t-h\right)  dt+F\left(  s\right)  ,
\end{align*}
where $Z\left(  s\right)  =L\left\{  z\left(  t\right)  \right\}  ,\ F\left(
s\right)  =L\left\{  f\left(  t\right)  \right\}  $. For sufficiently large
$s$, such that%
\[
\left\Vert A+\Omega e^{-hs}\right\Vert <s^{\mu},
\]
the matrix $s^{\mu}I+A+\Omega e^{-hs}$ is invertible and it holds that
\begin{align*}
Z\left(  s\right)   &  =s^{2\left(  1-\nu\right)  +\mu\nu-1}\left(  s^{\mu
}I+A+\Omega e^{-hs}\right)  ^{-1}\left(  I_{0,t}^{\left(  1-\nu\right)
\left(  m-\mu\right)  }\varphi\right)  \left(  0\right) \\
&  +s^{2\left(  1-\nu\right)  +\mu\nu-2}\left(  s^{\mu}I+A+\Omega
e^{-hs}\right)  ^{-1}\left(  I_{0,t}^{\left(  1-\nu\right)  \left(
m-\mu\right)  -1}\varphi\right)  \left(  0\right) \\
&  -\left(  s^{\mu}I+A+\Omega e^{-hs}\right)  ^{-1}\Omega\Psi\left(  s\right)
\\
&  +\left(  s^{\mu}I+A+\Omega e^{-hs}\right)  ^{-1}F\left(  s\right)  .
\end{align*}
By Lemma \ref{lem:4}%
\begin{align}
z\left(  t\right)   &  =Y_{\mu,\left(  \mu-2\right)  \left(  1-\nu\right)
+1}^{h}\left(  t\right)  \left(  I_{0,t}^{\left(  1-\nu\right)  \left(
m-\mu\right)  }\varphi\right)  \left(  0\right) \nonumber\\
&  +Y_{\mu,\left(  \mu-2\right)  \left(  1-\nu\right)  +2}^{h}\left(
t\right)  \left(  I_{0,t}^{\left(  1-\nu\right)  \left(  m-\mu\right)
-1}\varphi\right)  \left(  0\right) \nonumber\\
&  -\int_{-h}^{0}Y_{\mu,\mu}^{h}\left(  t-s-h\right)  \Omega\varphi\left(
s\right)  ds+\int_{0}^{t}Y_{\mu,\mu}^{h}\left(  t-s\right)  f\left(  s\right)
ds, \label{qq2}%
\end{align}
since%
\begin{align*}
&  L^{-1}\left\{  \left(  s^{\mu}I+A+\Omega e^{-hs}\right)  ^{-1}\Omega
\Psi\left(  s\right)  \right\}  =L^{-1}\left\{  \left(  s^{\mu}I+A+\Omega
e^{-hs}\right)  ^{-1}\right\}  \ast L^{-1}\left\{  \Omega\Psi\left(  s\right)
\right\} \\
&  =\int_{0}^{t}Y_{\mu,\mu}^{h}\left(  t-s\right)  \Omega\psi\left(
s-h\right)  ds=\int_{0}^{h}Y_{\mu,\mu}^{h}\left(  t-s\right)  \Omega
\varphi\left(  s-h\right)  ds\\
&  =\int_{-h}^{0}Y_{\mu,\mu}^{h}\left(  t-s-h\right)  \Omega\varphi\left(
s\right)  ds.
\end{align*}
Now the assumption on the exponential boundedness can be omitted. We can
easily check that (\ref{qq2}) is a solution of (\ref{eq1}).
\end{proof}

\begin{theorem}
Let $1<\mu<2$, $0\leq\nu\leq1,$ $f\in C\left(  \left[  0,\infty\right)
,\mathbb{R}^{d}\right)  .$ System (\ref{eq1}) is stable in Ulam-Hyers sence on
$[0,T]$.
\end{theorem}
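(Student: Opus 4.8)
The plan is to read the Ulam--Hyers estimate directly off the variation-of-constants representation established in Theorem \ref{thm:1}, with no Gr\"onwall-type iteration. Fix $\varepsilon>0$ and let $z^{\ast}$ be any function satisfying (\ref{inq1}) together with the initial conditions in (\ref{eq1}). First I would introduce the defect
\[
g\left(  t\right)  :=D_{0,t}^{\mu,\nu}z^{\ast}\left(  t\right)  +Az^{\ast}\left(  t\right)  +\Omega z^{\ast}\left(  t-h\right)  -f\left(  t\right)  ,\qquad t\in\left[  0,T\right]  ,
\]
which by (\ref{inq1}) satisfies $\left\Vert g\left(  t\right)  \right\Vert \leq\varepsilon$ on $\left[  0,T\right]  $. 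Then $z^{\ast}$ is exactly the solution of the initial value problem (\ref{eq1}) with $f$ replaced by $f+g$, the history $\varphi$ and the two initial values $a_{0},a_{1}$ being unchanged. Applying the representation of Theorem \ref{thm:1} once to $z^{\ast}$ (forcing term $f+g$) and once to the genuine solution $z$ (forcing term $f$), the two terms carrying $\left(  I_{0,t}^{\left(  1-\nu\right)  \left(  2-\mu\right)  }\varphi\right)  \left(  0\right)  $ and $\left(  I_{0,t}^{\left(  1-\nu\right)  \left(  2-\mu\right)  -1}\varphi\right)  \left(  0\right)  $, and the delayed integral $\int_{-h}^{0}Y_{\mu,\mu}^{h}\left(  t-s-h\right)  \Omega\varphi\left(  s\right)  ds$, coincide for the two problems and cancel upon subtraction, leaving the clean identity
\[
z^{\ast}\left(  t\right)  -z\left(  t\right)  =\int_{0}^{t}Y_{\mu,\mu}^{h}\left(  t-s\right)  g\left(  s\right)  ds ,\qquad t\in\left[  0,T\right]  .
\]

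From this identity the stability estimate is immediate once the convolution kernel is controlled:
\[
\left\Vert z^{\ast}\left(  t\right)  -z\left(  t\right)  \right\Vert \leq\int_{0}^{t}\left\Vert Y_{\mu,\mu}^{h}\left(  t-s\right)  \right\Vert \left\Vert g\left(  s\right)  \right\Vert ds\leq\varepsilon\int_{0}^{T}\left\Vert Y_{\mu,\mu}^{h}\left(  \tau\right)  \right\Vert d\tau =:C\varepsilon ,
\]
so the constant $C:=\int_{0}^{T}\left\Vert Y_{\mu,\mu}^{h}\left(  \tau\right)  \right\Vert d\tau$ serves in Definition \ref{def:2}; it is manifestly independent of $\varepsilon$ and of the choice of $z^{\ast}$.

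The one substantive point — and the main obstacle — is to verify that $C<\infty$, i.e.\ that $\tau\mapsto\left\Vert Y_{\mu,\mu}^{h}\left(  \tau\right)  \right\Vert $ is integrable on $\left[  0,T\right]  $. I would first estimate the kernels $Q_{k,m}$ by induction on $k$ from the recursion $Q_{k+1,m}=AQ_{k,m}+\Omega Q_{k,m-1}$ with $Q_{0,0}=I$ and $Q_{k,-1}=Q_{0,m}=\Theta$ for $m\geq1$, obtaining
\[
\left\Vert Q_{k,m}\right\Vert \leq\binom{k}{m}\left\Vert A\right\Vert ^{k-m}\left\Vert \Omega\right\Vert ^{m},\qquad 0\leq m\leq k ,
\]
and $Q_{k,m}=\Theta$ for $m>k$, the inductive step being the Pascal identity $\binom{k}{m}+\binom{k}{m-1}=\binom{k+1}{m}$. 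On $\left[  0,T\right]  $ only the finitely many indices $m$ with $mh\leq T$ contribute to $Y_{\mu,\mu}^{h}$, so there the double series defining $Y_{\mu,\mu}^{h}$ is dominated term by term by
\[
\sum_{m\,:\,mh\leq T}\left\Vert \Omega\right\Vert ^{m}\sum_{k\geq m}\binom{k}{m}\left\Vert A\right\Vert ^{k-m}\frac{T^{k\mu+\mu-1}}{\Gamma\left(  k\mu+\mu\right)  } ;
\]
each inner sum converges (e.g.\ by the ratio test, since $\Gamma\left(  k\mu+\mu\right)  $ grows super-exponentially in $k$ while $\binom{k}{m}\leq 2^{k}$) and the outer sum is finite. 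This gives uniform convergence of the series on $\left[  0,T\right]  $ and, because every exponent satisfies $k\mu+\mu-1\geq\mu-1>0$, forces $\left\Vert Y_{\mu,\mu}^{h}\left(  \tau\right)  \right\Vert \to0$ as $\tau\to0^{+}$; together with the continuity of $Y_{\mu,\mu}^{h}$ on $\left(  0,\infty\right)  $ noted above, this makes $Y_{\mu,\mu}^{h}$ bounded on $\left[  0,T\right]  $, whence $C\leq T\sup_{\tau\in\left[  0,T\right]  }\left\Vert Y_{\mu,\mu}^{h}\left(  \tau\right)  \right\Vert <\infty$. Combining this with the displayed estimate yields $\left\Vert z^{\ast}\left(  t\right)  -z\left(  t\right)  \right\Vert \leq C\varepsilon$ for all $t\in\left[  0,T\right]  $, i.e.\ the Ulam--Hyers stability of (\ref{eq1}). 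In short, the whole difficulty is the kernel bound: producing a usable estimate on $\left\Vert Q_{k,m}\right\Vert $ and summing the resulting double series; everything else is bookkeeping with Theorem \ref{thm:1}.
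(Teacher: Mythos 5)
Your proposal is correct and follows essentially the same route as the paper: introduce the defect, use the representation from Theorem \ref{thm:1} so that the $\varphi$-dependent terms cancel, and bound the remaining convolution $\int_{0}^{t}Y_{\mu,\mu}^{h}(t-s)g(s)\,ds$ by $\varepsilon\int_{0}^{T}\Vert Y_{\mu,\mu}^{h}(\tau)\Vert\,d\tau$. The only difference is that you additionally verify $C<\infty$ via the bound $\Vert Q_{k,m}\Vert\leq\binom{k}{m}\Vert A\Vert^{k-m}\Vert\Omega\Vert^{m}$, a point the paper's proof takes for granted.
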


\begin{proof}
Let $z^{\ast}\left(  t\right)  $ satisfy the inequality (\ref{inq1}) and the
initial conditions in (\ref{eq1}). Set%
\[
X\left(  t\right)  =D_{0,t}^{\mu,\nu}z^{\ast}\left(  t\right)  +Az^{\ast
}\left(  t\right)  +\Omega z^{\ast}\left(  t-h\right)  -f\left(  t\right)
,\ t\in\left[  0,T\right]  .
\]
It follows from definition \ref{def:2} that $\left\Vert X\left(  t\right)
\right\Vert <\varepsilon$. By Theorem \ref{thm:1} we have
\begin{align*}
z^{\ast}\left(  t\right)   &  =Y_{\mu,\left(  \mu-2\right)  \left(
1-\nu\right)  +1}^{h}\left(  t\right)  \left(  I_{0,t}^{\left(  1-\nu\right)
\left(  m-\mu\right)  }\varphi\right)  \left(  0\right) \\
&  +Y_{\mu,\left(  \mu-2\right)  \left(  1-\nu\right)  +2}^{h}\left(
t\right)  \left(  I_{0,t}^{\left(  1-\nu\right)  \left(  m-\mu\right)
-1}\varphi\right)  \left(  0\right) \\
&  -\int_{-h}^{0}Y_{\mu,\mu}^{h}\left(  t-s-h\right)  \Omega\varphi\left(
s\right)  ds+\int_{0}^{t}Y_{\mu,\mu}^{h}\left(  t-s\right)  \left(  f\left(
s\right)  -X\left(  s\right)  \right)  ds.
\end{align*}
Thus we can estimate the difference $z^{\ast}\left(  t\right)  -z\left(
t\right)  $ as follows%
\[
\left\Vert z^{\ast}\left(  t\right)  -z\left(  t\right)  \right\Vert
=\left\Vert \int_{0}^{t}Y_{\mu,\mu}^{h}\left(  t-s\right)  X\left(  s\right)
ds\right\Vert \leq\varepsilon\int_{0}^{T}\left\Vert Y_{\mu,\mu}^{h}\left(
T-s\right)  \right\Vert ds=C\varepsilon.
\]
Then the problem (\ref{eq1}) is Ulam-Hyers stable on $[0,T]$.
\end{proof}

\section{Conclusion}

The article solves a problem of finding exact analytical solution of
continuous linear time-delay systems using the delayed Mittag-Leffler type
matrix functions of two variables. In articles \cite{12}, \cite{4} delayed
exponential is suggested to obtain an exact solution of delayed first order
continuous equations. Similar results for sequential Caputo type and
Riemann-Liouville type fractional linear time-delay systems of order
$1<2\alpha<2$ were obtained in \cite{16}, \cite{17}. These results are
obtained either for systems with pure delay or under the condition of
commutativity of $A$ and $\Omega$. In this article we drop the commutativity
condition. The result has been obtained by defining the new delayed
Mittag-Leffler matrix function and employing the Laplace transform. The work
contained in this article will be useful for future research on fractional
time-delay systems.

\bigskip

\end{document}